\documentclass[reqno,12pt,twoside]{article}
\usepackage{fullpage}
\usepackage{amssymb}
\usepackage{amsmath}
\usepackage{amsthm}
\numberwithin{equation}{section}
\newtheorem{theorem}{Theorem}[section]
\newtheorem{lemma}[theorem]{Lemma}

\newcommand{\Nzero}{\mathbb N_0}

\begin{document}

\title{\Large On a problem of Nathanson related to minimal asymptotic bases and maximal asymptotic nonbases
}\author{\large Shi-Qiang Chen\thanks{This work is supported by the National Natural Science Foundation of China (Grant No.
12301003), the Anhui Provincial Natural Science Foundation (Grant No. 2308085QA02) and
the University Natural Science Research Project of Anhui Province (Grant No. 2022AH050171).}}
\date{} \maketitle
 \vskip -3cm
\begin{center}
\vskip -1cm { \small
\begin{center}
 School of Mathematics and Statistics, Anhui Normal
University
\end{center}
\begin{center}
Wuhu 241002, PR China
\end{center}}
\end{center}

{\bf Abstract.} Let $\Nzero=\{0,1,2,\ldots\}$ and let $h\ge2$ be an integer. For a set $A\subseteq\Nzero$, write $hA$ for the set of all sums of $h$, not necessarily distinct, elements of $A$. In this paper, we prove that for every $h\ge2$, there is a partition $\Nzero=A\sqcup B$ such that $A$ is a minimal asymptotic basis of order $h$ and $B$ is a maximal asymptotic nonbasis of order $h$. This solves an open problem posed by Nathanson in 1974.

{\bf Keywords:} asymptotic basis; minimal asymptotic basis; maximal asymptotic nonbasis; additive basis; partition

2020 {\it Mathematics Subject Classification}: 11B13.

\section{Introduction}

Let $\Nzero=\{0,1,2,\ldots\}$. For an integer $h\ge2$ and a set $A\subseteq\Nzero$, define
\[
 hA=\{a_1+\cdots+a_h:a_1,\ldots,a_h\in A\}.
\]
We call $A$ an \emph{asymptotic basis of order $h$} if every sufficiently large integer belongs to $hA$; otherwise, $A$ is called an \emph{asymptotic nonbasis of order $h$}. An asymptotic basis $A$ is \emph{minimal} if $A\setminus\{a\}$ is not an asymptotic basis of order $h$ for every $a\in A$. Correspondingly, an asymptotic nonbasis $B$ is \emph{maximal} if $B\cup\{b\}$ is an asymptotic basis of order $h$ for every nonnegative integer $b\notin B$.

The well-known Erd\H{o}s--Tur\'an conjecture \cite{ErdosTuran1941} is closely related to the study of asymptotic bases of order $2$. In 1955, St\"ohr \cite{Stohr1955} introduced the concept of minimal asymptotic bases. In 1956, H\"artter \cite{Hartter1956} proved that minimal asymptotic bases of order $h$ exist for every integer $h\ge2$.

In 1974, Nathanson \cite{Nathanson1974} gave several examples of minimal asymptotic bases and also constructed an asymptotic basis of order $2$ containing no subset which is a minimal asymptotic basis of order $2$. Nathanson \cite{Nathanson1974} also posed the following famous problem.\\
{\bf Nathanson's Problem.}
Can $\Nzero$ be partitioned into two sets, one of which is a minimal asymptotic basis of order $h$ and the other a maximal asymptotic nonbasis of order $h$?

In 1976, Erd\H{o}s and Nathanson \cite{ErdosNathanson1976} proved that the nonnegative integers can be partitioned into a minimal asymptotic basis of order $2$ and a maximal asymptotic nonbasis of order $2$. For other related results on minimal asymptotic bases and maximal asymptotic nonbases, see [1-6, 8, 10-13, 15, 16, 18].

In this paper, we solve {\bf Nathanson's Problem.}

\begin{theorem}\label{thm1}
For every integer $h\ge2$, there is a partition
\[
 \Nzero=A\sqcup B
\]
such that $A$ is a minimal asymptotic basis of order $h$ and $B$ is a maximal asymptotic nonbasis of order $h$.
\end{theorem}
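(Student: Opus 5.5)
The natural first try is a congruence construction: put $B=2\Nzero$. Then $hB$ contains only even numbers, while for any odd $b\notin B$ the sums $kb+(\text{even})$ with $0\le k\le h$ cover every large odd number, so $B$ is a maximal asymptotic nonbasis. But its complement, the odd numbers, is not a minimal basis, and for even $h$ it is not a basis at all. More generally, the complement of a union of residue classes contains whole residue classes and can never be minimal. So the obstruction to $hB$ covering all large integers must be \emph{sparse} rather than modular. I would therefore build $A$ first as a thin minimal basis of ``digit type'' and take $B=\Nzero\setminus A$. Then I would arrange an infinite sparse target set $T=\{t_1<t_2<\cdots\}$ with $T\cap hB=\emptyset$. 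This makes $B$ a nonbasis, and maximality of $B$ has to be checked separately.

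Concretely, fix a large base $g$ (say $g=2^{h+1}$ or larger, depending on $h$) and a lacunary block structure. For $1\le i\le h$, let $A_i$ be the set of integers whose base-$g$ digits are supported on positions $\equiv i\pmod h$, and let $A=\bigcup_i A_i$, modified near a sparse set of scales. Every $n$ splits into its $h$ position classes, so $A$ is an asymptotic basis of order $h$. For minimality I would use \emph{witnesses}. For each $a\in A$ I produce infinitely many $n$ whose only representations as $h$-fold sums from $A$ use $a$, for instance $n=a+g^{N}(\text{suitable digits})$ with $N$ large, where carries are impossible because the digits are small compared with $g$. Deleting $a$ then leaves infinitely many $n$ uncovered, as minimality requires. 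For the nonbasis property I would choose $t_k$ with a rigid digit pattern, for example $t_k=h\,g^{N_k}-1$ with $N_k$ chosen so that the position classes force any representation of $t_k$ as a sum of $h$ elements to use at least one element of $A$. Equivalently, every $h$-fold sum of elements of $B$ avoids $t_k$, and the no-carry analysis in base $g$ should show that producing the long run of digits $g-1$ needs a ``pure'' element, which lies in $A$.

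For maximality, take $b\in A$ and show that $h(B\cup\{b\})$ contains every large $n$. Since $B$ is very large (it contains every integer whose digits meet at least two position classes), I expect to write $n-jb$ for a suitable $0\le j\le h-1$ as a sum of $h-j$ elements of $B$. This works by splitting the digits of $n-jb$ so that each summand has digits in two distinct position classes. The only delicate cases are $n$ close to the targets $t_k$ or having very few nonzero digits, and there the presence of $b$ is used to break the rigid pattern.

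The main obstacle is making the two requirements compatible. The targets $t_k$ must be avoided by $hB$, yet adding any single $b\in A$, in particular a small one, must cover all large integers, including all of $T$. I would first try to handle this by making $T$ depend on the \emph{scale} only, so that $t_k-jb$ for $1\le j\le h$ loses the rigid digit pattern and becomes decomposable in $B$. If that fails for small $b$, I would instead interleave the construction inductively: define $A$ and $T$ block by block, and at each stage add to $B$ enough elements to cover $t_k-b$ for all $b<k$, while keeping the no-carry uniqueness arguments that secure minimality of $A$ and $t_k\notin hB$.
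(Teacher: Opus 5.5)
\textbf{The proposal has a genuine gap: a density obstruction rules out the whole approach.}

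Let $A\sqcup B=\Nzero$ be any partition, and suppose $t\notin hB$. Put $b_0=\min B$ and $m=t-(h-2)b_0$. Then $m\notin 2B$, so $B\cap[0,m]$ and $m-(B\cap[0,m])$ are disjoint subsets of $[0,m]$. This gives $|B\cap[0,m]|\le (m+1)/2$, hence $|A\cap[0,m]|\ge (m+1)/2$. A nonbasis $B$ misses infinitely many such $t$, so $A$ must have upper density at least $1/2$.

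Your digit-type $A=\bigcup_i A_i$ has density $0$. Its complement $B$, which you note contains almost every integer, is therefore already an asymptotic basis of order $2$, and hence of order $h$. So for the digit construction no sparse target set can avoid $hB$, whatever the digit pattern. The claim that no $h$-fold sum from $B$ can equal $hg^{N_k}-1$ is false for large $N_k$. ``Modifying near sparse scales'' would have to move at least half of $[0,t_k]$ into $A$, which destroys the digit structure and the no-carry uniqueness your minimality witnesses rely on. Your fallback also points the wrong way: adding elements to $B$ can only enlarge $hB$. The real tension is that $B$ must lose half of $[0,t_k]$ to $A$, and every element of $A$ in those dense stretches must still be essential.

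\textbf{The missing idea: the density of $A$ must oscillate, and minimality must be arranged separately at sparse scales.} The paper does this with an inductive block construction.
\begin{itemize}
\item In the Lemma~2 block, $A$ takes almost all of a long interval $[T+1,Q]$. $B$ receives only $L=2T$, the interval $[M,2M]$, and one point $x_c=Q-(h-2)L-c$ for each $c$ already in $A$. This forces $Q\notin hB$, while $Q=x_c+(h-2)L+c\in h(B\cup\{c\})$; this is where maximality comes from.
\item In the Lemma~3 block, $A$ is made sparse: it gets only $L$, $[M,2M]$, $y_c$ and $[W-h+2,W]$. Then every representation of $W$ in $hA$ must use $c$. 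Cycling $c=c_k$ through all elements of $A$ via $\sigma$ makes every element essential, including those lying in the dense intervals.
\item Lemmas~4 and~5 are transition blocks that make both $hA$ and $hB$ cover every other integer.
\end{itemize}
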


\section{Lemmas}

For the remainder of this section, assume that $h\ge3$, and put $d=10h$ and $T_0=100h^2$.
For integer intervals we use the convention $[u,v]=\{n\in\mathbb Z:u\le n\le v\}$. 

\begin{lemma}\label{lem1}
Let $S\subseteq\Nzero$, let $p,u,v$ be nonnegative integers with $p<u\le v$. If $p\in S$, $[u,v]\subseteq S$ and $v\ge 2u-p-1$,
then for every integer $m\ge2$, we have
\[
 [u+(m-1)p,mv]\subseteq mS.
\]
In particular, if $0\in S$ and $v\ge2u-1$, then
\[
 [u,mv]\subseteq mS.
\]
\end{lemma}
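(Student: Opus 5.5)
We prove the statement by induction on $m$, splitting the target interval into pieces according to how many copies of $p$ are used. For each $0\le k\le m$, taking $k$ copies of $p$ and $m-k$ elements of $[u,v]$ shows
\[
 I_k:=[kp+(m-k)u,\;kp+(m-k)v]\subseteq mS .
\]
For $k=m$ this is just the point $mp$. The claim is that $I_0,\dots,I_{m-1}$ together cover $[u+(m-1)p,mv]$.

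Note that $I_{m-1}=[(m-1)p+u,(m-1)p+v]$ starts exactly at the left endpoint $u+(m-1)p$ of the target. Also $I_0=[mu,mv]$ ends exactly at the right endpoint $mv$. So it suffices to show that consecutive intervals $I_k$ and $I_{k-1}$ (for $1\le k\le m-1$) overlap or abut, i.e. leave no gap.

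The left endpoint of $I_{k-1}$ is $(k-1)p+(m-k+1)u$. The right endpoint of $I_k$ is $kp+(m-k)v$. We need
\[
 (k-1)p+(m-k+1)u\le kp+(m-k)v+1 ,
\]
which is equivalent to
\[
 (m-k)(v-u)\ge u-p-1 .
\]
Since $m-k\ge1$ and $v\ge u$, the left side is at least $v-u$. The hypothesis $v\ge 2u-p-1$ gives exactly $v-u\ge u-p-1$, so the inequality holds.

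It remains to check that the union really is an interval from $u+(m-1)p$ to $mv$. The left endpoints $kp+(m-k)u$ decrease in $k$... in fact they decrease as $k$ increases, because $p<u$. The right endpoints $kp+(m-k)v$ also decrease as $k$ increases, because $p<v$. So the intervals are ordered monotonically from $I_{m-1}$ (leftmost) to $I_0$ (rightmost). Combined with the no-gap condition between neighbours, their union is the full interval $[(m-1)p+u,\,mv]$. This proves $[u+(m-1)p,mv]\subseteq mS$.

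For the special case, take $p=0$. Then $p<u$ needs $u\ge1$, and the hypothesis becomes $v\ge 2u-1$, giving $[u,mv]\subseteq mS$. If instead $u=0$ is allowed in the special case, the claim still holds. Indeed, $0\in S$ and $[0,v]\subseteq S$ give $mS\supseteq[0,v]+\dots+[0,v]=[0,mv]$ directly.

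The only delicate point is the gap inequality, and it is precisely what the hypothesis $v\ge2u-p-1$ is tailored to ensure. No induction on $m$ is actually needed.
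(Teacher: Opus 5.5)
Your proof is correct and is the paper's argument up to reindexing ($k=m-r$ copies of $p$): you cover the target by the intervals $kp+(m-k)[u,v]$ and reduce the no-gap condition to $(m-k)(v-u)\ge u-p-1$, which follows from $v\ge 2u-p-1$. Your endpoint-monotonicity check and the $u=0$ remark make explicit what the paper leaves implicit, and the opening mention of induction is superfluous since your argument is direct.
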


\begin{proof}
For $1\le r\le m$, write
\[
 I_r=r[u,v]+(m-r)p
   =[ru+(m-r)p,\,rv+(m-r)p]\subseteq mS.
\]
Noting that $v\ge 2u-p-1$, we have
\[
 (r+1)u+(m-r-1)p\le rv+(m-r)p+1.
\]
It follows that
\[
 [u+(m-1)p,mv]=\bigcup_{1\le r\le m} I_r\subseteq mS.
\]
This completes the proof of Lemma \ref{lem1}.
\end{proof}

\begin{lemma}\label{lem2}
Let $T$ and $U$ be two positive integers with $T\ge T_0$ and $T\ge2U-1$, and let $C$ and $D$ be two sets such that
\[
 0\in D,\qquad 1,2\in C,\qquad C\sqcup D=[0,T],\qquad [U,T]\subseteq D,\qquad
 [T+1,T+h-1]\subseteq hC.
\]
Put
\[
 L=2T,\qquad M=(2h-3)T+1,\qquad Q=2hM+1.
\]
For each $c\in C$, put
\[
 x_c=Q-(h-2)L-c.
\]
Put
\[
 G=\{L\}\cup[M,2M]\cup\bigcup_{c\in C}\{x_c\}.
\]
We define
\[
 E=C\cup([T+1,Q]\setminus G),\qquad F=D\cup G.
\]
Then
\begin{equation}\label{eq1}
[T+1,Q]\subseteq hE,
\end{equation}
\begin{equation}\label{eq2}
[T+1,Q-1]\subseteq hF,
\end{equation}

\begin{equation}\label{eq3}
Q\notin hF,
\end{equation}
and
\begin{equation}\label{eq4}
Q\in h(F\cup\{c\})
\end{equation}
for every $c\in C$.
\end{lemma}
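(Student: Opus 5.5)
The plan is to verify the four claims separately. Throughout I use the sizes of the pieces. Since $C\sqcup D=[0,T]$, $0\in D$ and $[U,T]\subseteq D$, we have $C\subseteq[1,U-1]$ with $U\le (T+1)/2$. Also $L=2T<M$. Each $x_c$ lies in the short window $[Q-2(h-2)T-U+1,\;Q-2(h-2)T-1]$, which sits far above $2M$ because $Q=2hM+1$ and $M\approx(2h-3)T$. In particular $2x_c>Q$ for every $c$.

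\textbf{Claims (4) and (3).} Claim (4) is immediate: $x_c+c+(h-2)L=Q$, and this uses $h$ summands of $F\cup\{c\}$.

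For (3), I split according to whether a sum of $h$ elements of $F$ uses some $x_c$.
\begin{itemize}
\item If it uses none, every summand is at most $2M$, so the sum is at most $2hM<Q$.
\item If it uses some $x_c$, it uses exactly one, since $2x_c>Q$. The other $h-1$ summands must then add up to $(h-2)L+c=2(h-2)T+c$. This is less than $M$, so no summand comes from $[M,2M]$, and all of them lie in $D\cup\{L\}$.
\end{itemize}
In the second case, suppose $k$ of the $h-1$ summands equal $L$ and the remaining $h-1-k$ lie in $D\subseteq[0,T]$.
\begin{itemize}
\item $k=h-1$ gives $2(h-1)T$, which is too large.
\item $k=h-2$ forces the last summand to equal $c$, which is impossible because $c\in C$ and $C\cap D=\emptyset$.
\item $k\le h-3$ gives a total of at most $(h-1+k)T\le(2h-4)T<2(h-2)T+c$, using $c\ge 1$.
\end{itemize}
So $Q\notin hF$.

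\textbf{Claim (2).} I will apply Lemma \ref{lem1} three ways.
\begin{itemize}
\item Take $S=F$, $p=0$ and $[u,v]=[M,2M]$; here $v\ge 2u-1$ holds. This gives $[M,2hM]=[M,Q-1]\subseteq hF$.
\item Take $S=D$, $p=0$ and $[u,v]=[U,T]$; this is allowed because $T\ge 2U-1$. This gives $[U,mT]\subseteq mD$ for every $m\ge2$.
\item For $0\le k\le h-2$, add $k$ copies of $L$ to the previous bullet with $m=h-k$. This gives $kL+[U,(h-k)T]=[2kT+U,(h+k)T]\subseteq hF$.
\end{itemize}
Consecutive intervals in the third bullet overlap, because $2(k+1)T+U\le (h+k)T+1$ when $h-k\ge2$. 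The case $k=h-3$ reaches $(2h-3)T=M-1$. So $[T+1,M-1]\subseteq hF$, and together with the first bullet this proves (2).

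\textbf{Claim (1).} This is the main obstacle. The only small elements of $E$ are $1,2$; note $0\notin E$. Put $E'=[T+1,Q]\setminus G$. Then $(h-1)\{1,2\}=[h-1,2h-2]$, so $y+[h-1,2h-2]\subseteq hE$ for each $y\in E'$. Also $(h-2)\{1,2\}=[h-2,2h-4]$, so $y_1+y_2+[h-2,2h-4]\subseteq hE$ for $y_1,y_2\in E'$. The range $[T+1,T+h-1]$ is given by hypothesis. For $n\in[T+h,Q]$, the first construction works whenever the window $[n-2h+2,n-h+1]$, of length $h$, meets $E'$. This window lies in $[T+1,Q]$, and it can miss $E'$ only near $[M,2M]$ or near the short cluster of $x_c$'s; the single point $L$ is harmless. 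For those two regions I use two large summands:
\begin{itemize}
\item Sums of two elements of $[T+1,L-1]\cup[L+1,M-1]$ fill an interval from about $2T+2$ to $2M-2$, which covers the region near $[M,2M]$.
\item Near the $x_c$ cluster, take $y_1\in(2M,Q]$ just below the cluster and $y_2\in[T+1,L-1]$. Since the cluster has length less than $U\le T$, these sums cover it.
\end{itemize}
The remaining work is to check that the endpoints overlap, using $T\ge T_0=100h^2$, which makes all the $h$-sized slack negligible.
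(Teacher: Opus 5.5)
Your proofs of (2), (3) and (4) are correct and are essentially the paper's. The gap is in (1), at the upper end of the block $[M,2M]\subseteq G$. It is not slack that $T\ge T_0$ can absorb.

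\textbf{Where your two constructions fail to meet.}
\begin{itemize}
\item \emph{One large summand.} Take $y\in E'$ plus $h-1$ summands from $\{1,2\}$. For $n\in[2M+1,2M+h-1]$ you would need $y\in[n-2h+2,n-h+1]$. That window lies inside $[M,2M]$, so no such $y$ exists. This construction therefore resumes only at $(2M+1)+(h-1)=2M+h$.
\item \emph{Two large summands.} Take $y_1,y_2\in E'$ plus $h-2$ summands from $\{1,2\}$. If both $y_i\le M-1$, the total is at most $2(M-1)+2(h-2)=2M+2h-6$. If one $y_i$ exceeds $2M$, the total is at least $2M+T+h$.
\end{itemize}
These two ranges meet only when $h\ge5$. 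For $h=3$ the integers $2M+1$ and $2M+2$ are produced by neither construction, and for $h=4$ the integer $2M+3$ is missed.

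Concretely, for $h=3$:
\begin{itemize}
\item $2M+1=y+s$ with $s\in\{2,3,4\}$ forces $y\in[2M-3,2M-1]\subseteq G$.
\item $2M+1=y_1+y_2+s$ with $s\in\{1,2\}$ forces $y_1+y_2\in\{2M-1,2M\}$, which is impossible by the two cases above.
\end{itemize}
The gap does not depend on $T$. So your final ``endpoints overlap'' check is false for $h=3$ and $h=4$, both of which the lemma covers.

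\textbf{The fix.} You need a third large summand from $[T+1,M-1]\setminus\{L\}$. The paper gets this from Lemma~\ref{lem1} with $S=E$, $p=T+1$, $u=L+1$, $v=M-1$, which is admissible since $M-1\ge 3T$. This gives $[(h+1)T+h,\,h(M-1)]\subseteq hE$. That interval overlaps what you already cover and extends far beyond $2M+h$. For $h=3$ you can see it directly: $3[L+1,M-1]=[2M+1,3M-3]$.

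With this added, the rest of your window argument goes through. That includes your handling of the $x_c$ cluster using $y_1\in[2M+1,V]$ and $y_2\in[T+1,L-1]$, where $V=Q-(h-2)L-U$. The paper instead covers $[2M+h,Q]$ in one step, using $2[2M+1,V]$ plus $h-2$ copies of $1$.
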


\begin{proof}
Since $[U,T]\subseteq D$, $0\in D$ and $C\sqcup D=[0,T]$, it follows that
\[
 C\subseteq[1,U-1].
\]
Noting that $1,2\in C$, we have $U\ge3$.
Moreover, since $c\le T$, $L=2T$ and $M=(2h-3)T+1$, it follows that
\[
 x_c-2M=(2h-2)M+1-(h-2)L-c>0.
\]
Thus
\[
 2M<x_c<Q.
\]

The hypothesis gives
\[
 [T+1,T+h-1]\subseteq hC\subseteq hE.
\]
By $[T+1,L-1]\subseteq E$ and $1,2\in E$, we have 
\[
 [T+h,L+h-2]\subseteq hE.
\]
Applying Lemma~\ref{lem1} to $S=E$, $p=2$, $u=T+1$ and
\[v=L-1=2T-1=2(T+1)-2-1,
\]
we obtain
\[
 [T+2h-1,h(L-1)]\subseteq hE.
\]
Noting that $T\ge T_0$, these three intervals overlap or are adjacent.  Consequently,
\[
 [T+1,h(L-1)]\subseteq hE.
\]

Next, $[L+1,M-1]\subseteq E$, and $T+1\in E$. Applying Lemma~\ref{lem1},
now with $p=T+1$, $u=L+1$ and $v=M-1$, we obtain
\[
 [L+1+(h-1)(T+1),h(M-1)]\subseteq hE,
\]
since
\[
 M-1=(2h-3)T\ge3T=2(L+1)-(T+1)-1.
\]
Moreover,
\[
 L+1+(h-1)(T+1)=(h+1)T+h\le h(L-1)+1,
\]
where the last inequality follows from $T\ge T_0$.  Hence
\[
 [T+1,h(M-1)]\subseteq hE.
\]

Noting that $C\subseteq[1,U-1]$, we have
\[
 x_c\ge Q-(h-2)L-(U-1)
\]
for each $c\in C$.
Therefore, if
\[
 V=Q-(h-2)L-U,
\]
then
\[
 [2M+1,V]\subseteq E.
\]
Also
\[
 V-4M=2(h-2)(M-T)+1-U>0,
\]
so Lemma~\ref{lem1} applied to $S=E$, $p=1$, $u=2M+1$,
$v=V$ and $m=2$ yields
\[
 [2M+2,2V]\subseteq2E.
\]
It follows that
\[
 [2M+h,2V+h-2]\subseteq hE.
\]
Since $(h-2)M\ge2h-1$, it follows that
\[
2M+h\leq h(M-1)+1.
\]  Noting that
$2U\le T+1$, we have
\begin{align*}
 2V+h-2-Q
 &=Q-2(h-2)L-2U+h-2\\
 &\ge(4h^2-10h+7)T+3h-2>0.
\end{align*}  Thus,
\[
 [T+1,Q]\subseteq hE,
\]
which proves \eqref{eq1}.

By $0\in D$, $[U,T]\subseteq D$ and $T\ge2U-1$, Lemma~\ref{lem1}
with $S=D$ and $p=0$, we have
\[
 [U,mT]\subseteq mD
\]
 for every $m\ge2$.
For $0\le r\le h-2$, take $r$ copies of $L$ and $h-r$ summands from
$D$, we obtain
\[
 [rL+U,rL+(h-r)T]
   =[2rT+U,(h+r)T]\subseteq hF.
\]
For $0\le r\le h-3$, we have
\[
 (h+r)T+1-\bigl(2(r+1)T+U\bigr)
 =(h-r-2)T+1-U\ge0.
\]
It follows that 
\[
\bigcup_{0\le r\le h-2}[2rT+U,(h+r)T]=[U,(2h-2)T]\subseteq hF.
\]
  Since $M-1=(2h-3)T$ and $T+1\geq 2U$, it follows that
\[
 [T+1,M-1]\subseteq hF.
\]

On the other hand, $0\in F$ and $[M,2M]\subseteq F$.  For each
$1\le s\le h$, we have
\[
 s[M,2M]+(h-s)\cdot0=[sM,2sM]\subseteq hF.
\]
It follows that 
\[
\bigcup_{1\le s\le h}[sM,2sM]=[M,2hM]=[M,Q-1]\subseteq hF,
\]
which proves \eqref{eq2}.

For every $x_c$, we have
\[
 x_c-\frac Q2= \frac Q2-(h-2)L-c
 \ge hM-2(h-2)T-T>0.
\]
Thus a representation of $Q$ by $h$ elements of $F$ could contain at
most one element of the form $x_c$. If it contains no $x_c$, then every summand is at most $2M$, and so
 $Q\leq 2hM=Q-1$, a contradiction. Therefore,
one summand is $x_c$. It follows that
\[
 Q-x_c=(h-2)L+c\in (h-1)F.
\]
Since $C\subseteq[1,U-1]$, it follows that
\[
 (h-2)L+c\le2(h-2)T+U-1\le(2h-3)T=M-1.
\]
By the definition of $F$ and $x_d>2M$ for every $d\in C$, we have
\[
 F\cap[0,M-1]=D\cup\{L\}.
\]
Consequently,
\[
 Q-x_c=(h-2)L+c\in (h-1)(D\cup\{L\}).
\]
Let $r$ be the number of copies of $L$ among these $h-1$ summands.  If
$r\le h-3$, then
\[
 Q-x_c=(h-2)L+c\leq rL+(h-1-r)T=(h+r-1)T\le(2h-4)T=(h-2)L,
\]
which is a contradiction because $c\ge1$.  If
$r=h-2$, then $c\in D\cup\{L\}$.
Noting that $L=2T$, we have $c\in D$, a contradiction with $C\cap D=\varnothing$.
If $r=h-1$, the sum equals $(h-2)L+c=(h-1)L$, a contradiction with $c<T$. So $Q\notin hF$,
which proves \eqref{eq3}.

For every $c\in C$, by $x_c,L\in F$, we have
\[
 Q=x_c+\underbrace{L+\cdots+L}_{h-2\text{ copies}}+c\in h(F\cup\{c\}),
\]
which proves \eqref{eq4}.

This completes the proof of Lemma \ref{lem2}.
\end{proof}

\begin{lemma}\label{lem3}
Let $T$ and $U$ be two positive integers with $T\ge T_0$ and $T=2U-2$, and let $C$ and $D$ be two sets such that
\[
 0\in D,\qquad 1\in C,\qquad [d,d+2h]\subseteq D,\qquad C\sqcup D=[0,T],\qquad [U,T]\subseteq C.
\]
Put
\[
 L=2T,\qquad M=(2h-3)T+1,\qquad W=2hM+1.
\]
Fix $c\in C$ and put
\[
 y_c=W-(h-2)L-c.
\]
Put
\[
 H=\{L\}\cup[M,2M]\cup\{y_c\}\cup[W-h+2,W].
\]
We define
\[
 E=C\cup H,\qquad F=D\cup([T+1,W]\setminus H).
\]
Then
\begin{equation}\label{eq5}
[T+1,W]\subseteq hE\cap hF,
\end{equation}
and
\begin{equation}\label{eq6}
W\notin h(E\setminus\{c\}).
\end{equation}
\end{lemma}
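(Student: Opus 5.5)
The plan is to prove the two assertions separately. For \eqref{eq5} I will cover $[T+1,W]$ by overlapping intervals built with Lemma~\ref{lem1}, once inside $E$ and once inside $F$. For \eqref{eq6} I will rerun the size argument used for \eqref{eq3} in Lemma~\ref{lem2}.

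A structural difference from Lemma~\ref{lem2} has to be handled first: $0\notin E$, while $0\in F$. So in $hE$ every summand is at least $1$, and unused slots must be filled with $1\in C$ rather than with $0$.

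\emph{The inclusion $[T+1,W]\subseteq hE$.}
\begin{itemize}
\item Apply Lemma~\ref{lem1} with $S=E$, $p=1$, $u=U$, $v=T$. The hypothesis $v\ge 2u-p-1=2U-2$ is exactly $T=2U-2$. This gives $[U+h-1,hT]\subseteq hE$, and this interval contains $T+1$ because $U+h-1\le T+1$.
\item Use $r$ copies of $L$ together with $h-r$ summands from $\{1\}\cup[U,T]$, for $0\le r\le h-1$. The resulting intervals (via Lemma~\ref{lem1} with $m=h-r$) reach up to $(2h-1)T\ge M$.
\item Use $s$ summands from $[M,2M]$ and $h-s$ summands from $\{1\}\cup[U,T]$, for $1\le s\le h$. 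This produces intervals from about $sM+(h-s)$ to $2sM+(h-s)T$, which overlap consecutively since $T\ge T_0$. They reach $2hM=W-1$.
\item Finally, $W=y_c+(h-2)L+c\in hE$.
\end{itemize}
Only the overlap inequalities need checking, and they are routine.

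\emph{The inclusion $[T+1,W]\subseteq hF$.} Here $0\in F$, so $F\subseteq mF$ for all $m$, and it suffices to fill the gaps. The elements of $[T+1,W]$ not in $F$ are $L$, $[M,2M]$, $y_c$ and $[W-h+2,W]$.
\begin{itemize}
\item Apply Lemma~\ref{lem1} to the block $[T+1,L-1]$ with $p=d\in D$. The condition $v=2T-1\ge 2(T+1)-d-1$ holds. Taking $m=2$ gives $[T+1+d,4T-2]\subseteq 2F$, which covers $L$ (alternatively, $L=(T+1)+(T-1)$ with $T-1\in[U,T]$... but that lies in $C$, so the Lemma~\ref{lem1} route is the one to use).
\item Apply Lemma~\ref{lem1} to the block $[L+1,M-1]$ with $p=d$ and $m=2$ or $3$. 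This covers $[M,2M]$, since $2(M-1)$ together with an added element of $[d,d+2h]$ overshoots $2M$.
\item The remaining missing points are $y_c$ and $[W-h+2,W]$. Write each target as $a+e$ with $e\in[d,d+2h]\subseteq D$ and $a\in[2M+1,W-h+1]$. This is possible because $a=\text{target}-e$ ranges over $2h+1$ consecutive values, and at most one of them equals $y_c$, which lies far below $W-d-2h$ anyway. Pad with zeros to get $h$ summands.
\end{itemize}

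\emph{The claim \eqref{eq6}.} Suppose $W=e_1+\cdots+e_h$ with all $e_i\in E\setminus\{c\}$. Every element of $E$ is at least $1$.
\begin{enumerate}
\item If some summand lies in $[W-h+2,W]$, then the other $h-1$ summands contribute at least $h-1$, so the total is at least $W+1$. This is impossible.
\item If no summand is $y_c$, then all summands are at most $2M$, so the total is at most $2hM=W-1$. Hence some summand equals $y_c$.
\item Since $y_c>W/2$, exactly one summand equals $y_c$. Then
\[
W-y_c=(h-2)L+c\le M-1,
\]
so the other $h-1$ summands lie in $(C\setminus\{c\})\cup\{L\}$, whose elements are at most $T$ except for $L$.
\item Let $r$ be the number of copies of $L$ among them.
\begin{itemize}
\item If $r\le h-3$, the sum is at most $(h-1+r)T\le(2h-4)T<(h-2)L+c$.
\item If $r=h-2$, the last summand must equal $c$, which was removed.
\item If $r=h-1$, the sum is $(h-1)L\ne(h-2)L+c$, because $c\le T<L$.
\end{itemize}
\end{enumerate}
Every case is excluded, which proves \eqref{eq6}.

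The main obstacle is the bookkeeping in \eqref{eq5}. For $hE$ it is the absence of $0$, which forces padding by $1$ and shifts each interval. For $hF$ it is covering the holes at $L$, $[M,2M]$ and the top block $[W-h+2,W]$ using only the short block $[d,d+2h]$ of small elements of $D$. I would verify these overlaps first.
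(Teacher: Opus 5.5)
Your treatment of $hE$, of the points $L$, $y_c$ and $[W-h+2,W]$ in $hF$, and of \eqref{eq6} follows essentially the paper's argument. There is a genuine gap in the step ``apply Lemma~\ref{lem1} to the block $[L+1,M-1]$ with $p=d$'' used to cover $[M,2M]$ by $hF$.

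With $u=L+1=2T+1$ and $v=M-1=(2h-3)T$, the hypothesis $v\ge 2u-p-1$ reads $(2h-3)T\ge 4T+1-d$. This holds for $h\ge4$. For $h=3$ it is equivalent to $T\le d-1=29$, which contradicts $T\ge T_0=900$.

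The failure is real, not a bookkeeping slip. For $h=3$ we have $J=[L+1,M-1]=[2T+1,3T]$ and $[M,2M]=[3T+1,6T+2]$. A sum of three elements of $\{0\}\cup[d,d+2h]\cup J$ is either at most $3T+2d+4h$ (at most one summand from $J$) or at least $4T+2$. So the whole range $[3T+2d+4h+1,4T+1]\subseteq[M,2M]$ is missed.

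Changing $p$ does not help. Lemma~\ref{lem1} would need $p\ge 2u-v-1=T+1$, while $D\subseteq[0,U-1]$. Moreover, the hypotheses allow $D=\{0\}\cup[d,d+2h]$. In that case every representation in $3F$ of a number in this range must use an element of the lower block $[T+1,L-1]$, which your argument never combines with $J$.

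The missing idea is to sum across the hole at $L$. For $h=3$, the paper uses $[T+1,2T-1]+J=[3T+2,5T-1]$ together with $2J=[4T+2,6T]$ to get $[M+1,2M-2]\subseteq hF$. It then handles the three endpoints explicitly:
\[
 M=d+(M-d),\qquad 2M-1=3T+(3T-d)+(d+1),\qquad 2M=3T+(3T-d)+(d+2),
\]
with $M-d,\,3T-d\in J$ and $d+1,d+2\in D$. Your own $J+d\ni M$ and $2J+d\ni 2M-1,2M$ would serve for these endpoints equally well. With the case split $h\ge4$ (your Lemma~\ref{lem1} argument is valid there) versus $h=3$ (as above), the proof goes through.

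One minor point: in the $hE$ part, Lemma~\ref{lem1} requires $m\ge2$, so your $r=h-1$ interval is not produced by it. It is also not needed, since $r\le h-2$ already reaches $(2h-2)T\ge M+h-2$.
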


\begin{proof}
Since $0\in D$, $c\in C$ and $C\sqcup D=[0,T]$, it follows that
\[
 1\le c\le T.
\]
Noting that
\[
 (h-2)L+T=(2h-3)T=M-1,
\]
we have
\[
 y_c\ge W-(M-1)=(2h-1)M+2>2M.
\]
Moreover,
\[
 W-h+2-y_c=(h-2)L+c-h+2>0.
\]
Consequently,
\[
 2M<y_c<W-h+2,
\]
so the four pieces occurring in the definition of $H$ are pairwise disjoint.

By the definition of $y_c$, we have
\[
 W=y_c+\underbrace{L+\cdots+L}_{h-2\text{ copies}}+c\in hE.
\]
We first prove \eqref{eq6}. Noting that $0\in D$ and $C\sqcup D=[0,T]$, we have $0\notin E$. It follows that $e>0$ for every $e\in E$. Write
\[W=w_1+w_2+\ldots+w_h,~~~ w_i\in E,~~~i=1,2,\ldots,h.
\]
If there exists an integer $i_0\in\{1,2,\ldots,h\}$ such that
\[
 w_{i_0}\in[W-h+2,W],
\]
then 
\[
 W\ge w_{i_0}+h-1\ge W+1,
\]
a contradiction. Thus, $w_i\not\in [W-h+2,W]$ for $i=1,2,\ldots,h$.

Since
\[
 y_c-\frac W2\ge (h-1)M+\frac32>0,
\]
it follows that
\[
\sum\limits_{i=1}^{h}|\{w_i\}\cap\{y_c\}|\leq 1.
\] 
If \[
\sum\limits_{i=1}^{h}|\{w_i\}\cap\{y_c\}|=0,
\] 
then 
\[
 W\le 2hM=W-1,
\]
a contradiction. Therefore,
\[
\sum\limits_{i=1}^{h}|\{w_i\}\cap\{y_c\}|=1.
\] 
It follows that $W-y_c\in (h-1)E$ and
\[
 W-y_c=(h-2)L+c\le(h-2)L+T=M-1.
\]
By the definition of $E$, we have
\[
 E\cap[0,M-1]=C\cup\{L\},
\]
and so \[
W-y_c\in (h-1)(C\cup\{L\}).
\] 
Let $r$ be the number of copies of $L$ among the remaining $h-1$ summands. If $r\le h-3$, then
\[
W-y_c\leq rL+(h-1-r)T=(h+r-1)T\le(2h-4)T=(h-2)L,
\]
which is impossible because $c\ge1$. If $r=h-1$, then
\[
 (h-2)L+c=(h-1)L,
\]
which is impossible because $c\le T<L$. Thus $r=h-2$, so there exists an integer $i_1\in\{1,2,\ldots,h\}$ such that $w_{i_1}=c$, which proves \eqref{eq6}.

We next prove \eqref{eq5}. Noting that $1\in E$, $[U,T]\subseteq E$ and 
\[
 T=2U-2=2U-1-1,
\]
Lemma~\ref{lem1} applies with $S=E$, $p=1$, $u=U$, $v=T$ and $m=h-k$ for every $0\le k\le h-2$.
Then
\[
 I_k=[2kT+U+h-k-1,(h+k)T]\subseteq hE.
\]
Noting that $U=(T+2)/2$ and $T\ge T_0$, we have $U\ge h+2$, and hence
\[
 \min I_0=U+h-1\le T+1.
\]
For $0\le k\le h-3$, by $T=2U-2$ and $U\ge h+2$, we have
\[
 U+h-k-3\le(h-k-2)T.
\]
Moreover,
\[
 \max I_{h-2}=(2h-2)T\ge(2h-3)T=M-1.
\]
Consequently,
\[
 [T+1,M-1]\subseteq hE.
\]
Noting that $T,1,L\in E$, we have
\[
 M=\underbrace{L+\cdots+L}_{h-2\text{ copies}}+T+1\in hE.
\]
For $1\le j\le h-2$, noting that
\[
 T+1+j\in[2U,2T]=2[U,T],
\]
we have
\[
 M+j=(h-2)L+(T+1+j)\in hE.
\]
Finally, Lemma~\ref{lem1} applied to $S=E$, $p=1$, $u=M$ and $v=2M$ gives
\[
 [M+h-1,2hM]=[M+h-1,W-1]\subseteq hE.
\]
It follows from $W\in hE$ that
\[
 [T+1,W]\subseteq hE.
\]
Noting that $e\in F$ for every $e\in [T+1,W]\setminus H$ and $0\in F$, we have $e\in hF$ for every $e\in [T+1,W]\setminus H$. Thus it is enough to cover the four pieces of $H$.

Noting that $d\in D$, $d<T$ and $L=2T$, we have
\[
 T<L-d<L.
\]
Hence, $L-d\in[T+1,L-1]\subseteq F$, and so
\[
 L=(L-d)+d+(h-2)\cdot0\in hF.
\]

Next, put
\[
 J=[L+1,M-1]=[2T+1,M-1]\subseteq F.
\]
Suppose $h\ge4$. Since
\[
 4T+2\le M,
\]
it follows that
\[
 [M,2M-2]\subseteq2J=[4T+2,2M-2]\subseteq hF.
\]
Moreover,
\[
 6T+3\le2M-1\le2M\le3M-3,
\]
so
\[
 2M-1,2M\in3J\subseteq hF.
\]
Thus,
\[
 [M,2M]\subseteq hF.
\]

If $h=3$, then $M=3T+1$ and $J=[2T+1,3T]$. It follows that
\[
 [T+1,2T-1]+J=[3T+2,5T-1]\subseteq hF
\]
and
\[
 2J=[4T+2,6T]\subseteq hF.
\]
Thus, $[M+1,2M-2]\subseteq hF$. Since 
\[
 d,d+1,d+2\in[d,d+2h]\subseteq D,
\]
while $M-d,3T-d\in J$, it follows that
\[
 M=d+(M-d)+(h-2)\cdot0\in hF,
\]
and
\[
 2M-1=3T+(3T-d)+(d+1)\in hF,
\]
\[
 2M=3T+(3T-d)+(d+2)\in hF.
\]
Hence, $[M,2M]\subseteq hF$.

Since $d+2h=12h$ and $T\ge T_0$, it follows that
\[
 y_c-(2M+d+2h)\ge(2h-3)M+2-12h>0.
\]
Thus for every $b\in[d,d+2h]$, we have
\[
 2M<y_c-b<y_c<W-h+2.
\]
By the construction, $y_c-b\in F$, we have
\[
 y_c=(y_c-b)+b+(h-2)\cdot0\in hF.
\]
Let $z\in[W-h+2,W]$. Choose $b\in[d,d+2h]$ such that $z-b\ne y_c$. Since $d=10h>h-2$, it follows that
\[
 z-b\le W-d<W-h+2.
\]
On the other hand,
\[
 z-b\ge W-h+2-(d+2h)=2hM+3-13h>2M.
\]
Therefore,
\[
 2M<z-b<W-h+2,
\]
and, by the choice of $b$, the integer $z-b$ does not belong to any of the four pieces of $H$. Hence, $z-b\in F$ and
\[
 z=(z-b)+b+(h-2)\cdot0\in hF.
\]
It follows that
\[
 [T+1,W]\subseteq hF.
\]
Combining the two inclusions proves \eqref{eq5}. 

This completes the proof of Lemma \ref{lem3}.
\end{proof}

\begin{lemma}\label{lem4}
Let sets $E,F$ and integers $Q$ be as defined in Lemma \ref{lem2}. Put
\[
 R=2Q+2.
\]
We define
\[
X=E\cup [R+1,2R],~~~Y=F\cup [Q+1,R].
\]
Then
\begin{equation}\label{eq7}
[Q+1,2R]\subseteq hX\cap hY.
\end{equation}

\end{lemma}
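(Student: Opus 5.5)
We prove $[Q+1,2R]\subseteq hX$ and $[Q+1,2R]\subseteq hY$ separately. Both use the facts from Lemma~\ref{lem2} that $0\in D\subseteq F$, $1,2\in C\subseteq E$, $E\cup F=[0,Q]$ with $E\cap F=\varnothing$, and
\[
[T+1,Q]\subseteq hE,\qquad [T+1,Q-1]\subseteq hF .
\]
Note that $R=2Q+2$ and $2R=4Q+4$.

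\emph{The set $hY$.} Since $0\in Y$ and $[Q+1,R]\subseteq Y$, any single element of $[Q+1,R]$ is a sum of $h$ elements of $Y$ (pad with zeros). Hence $[Q+1,R]\subseteq hY$. For the rest, apply Lemma~\ref{lem1} with $S=Y$, $p=0$, $u=Q+1$, $v=R$. The hypothesis $v\ge 2u-p-1$ reads
\[
R=2Q+2\ge 2(Q+1)-1 ,
\]
which holds. For $m=h\ge3$ the lemma gives $[Q+1,hR]\subseteq hY$. Since $hR\ge 2R$, this proves the $Y$ half.

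\emph{The set $hX$.} Here $0\notin X$, so we cannot pad with zeros and must treat sums separately. We split $[Q+1,2R]$ into three ranges.

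\begin{itemize}
\item \emph{Low range, near $Q+1$.} The interval $[Q+1,R]$ lies in $Y$, not in $X$. So these integers must come from sums of elements of $E$ alone, or from sums using $[R+1,2R]$ together with small elements, which is impossible since $R+1>Q+1$.
 We use the structure of $E$ near $Q$. By the proof of Lemma~\ref{lem2}, $[2M+1,V]\subseteq E$ with $V=Q-(h-2)L-U$, and the intervals $[L+1,M-1]$, $[T+1,L-1]$ also lie in $E$. Two elements of $[2M+1,V]$ give all of $[4M+2,2V]$. Padding with $h-2$ copies of $1$ or $2$ (that is, sums $e_1+e_2+\sum$ of $1$'s and $2$'s, as in the proof of \eqref{eq1}) gives $[4M+h,2V+2(h-2)]\subseteq hE$.
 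From the estimate $2V+h-2-Q\ge(4h^2-10h+7)T+3h-2$ in Lemma~\ref{lem2}, we get $2V\approx 2Q-O(hT)$ while $R=2Q+2$. So this sumset reaches well beyond $Q$, but it falls short of $R$ by about $4(h-2)L+2U$.
 To fill the gap up to $R$ and beyond, use three large summands from $[2M+1,V]$ when $h\ge3$. Lemma~\ref{lem1} with $p=1$, $u=2M+1$, $v=V$, $m=3$ gives $[2M+3,3V]\subseteq 3E$, and then padding with $1$'s gives $[2M+h,3V+h-3]\subseteq hX$. Since $V\ge Q-O(hT)$ and $Q=2hM+1$, we have $3V\ge 3Q-O(hT)>R$. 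Hence $[Q+1,R+1]$ is covered, because $2M+h\le Q+1$.
\item \emph{Middle range, above $R$.} Elements $x\in[R+1,2R]$ are themselves in $X$. Writing $x=(x-s)+s$, where $s$ is a sum of $h-1$ elements of $\{1,2\}$, covers $[R+h,2R+2(h-1)]$. The integers just above $R$ are already covered by the $3V$ range in the previous step, since $3V\gg R+h$.
\item \emph{Upper range.} Taking two summands from $[R+1,2R]$ is unnecessary, because $2R$ itself is covered by the padding in the middle range: $2R=(2R-(h-1))+(h-1)\cdot 1$, and $2R-(h-1)>R$.
\end{itemize}

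The main obstacle is the bookkeeping in the low range. The range $[Q+1,R]$ avoids $X$, so it must be produced by sums of at least two large elements of $E$, and the set $E$ has holes at $L$, at $[M,2M]$ and at the points $x_c$ near $Q-(h-2)L$. Using $[2M+1,V]$ with $m=3$ in Lemma~\ref{lem1} avoids all these holes. The checks needed are the hypothesis $V\ge 2(2M+1)-2$, which follows from $V-4M>0$ as shown in Lemma~\ref{lem2}, and the inequality $3V+h-3\ge R+h$, which follows from $T\ge T_0$.
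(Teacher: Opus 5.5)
Your proof is correct and follows essentially the paper's route. The $Y$ half is the same as the paper's (pad with $0$ and use one or two elements of $[Q+1,R]$). For the $X$ half you likewise apply Lemma~\ref{lem1} with $p=1$ to $[2M+1,V]\subseteq E$, then cover $[R+h,2R]$ by one element of $[R+1,2R]$ plus small padding. The only cosmetic difference is that you take $m=3$ and pad with $h-3$ ones, where the paper takes $m=h$ on the truncated interval $[2M+1,(2h-1)M+1]$. Your needed inequality $3V\ge R+3$ does hold, since $3V-R=(4h^2-12h+12)T+2h-1-3U>0$ using $2U\le T+1$.
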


\begin{proof}
Since $0\in F\subseteq Y$ and $[Q+1,R]\subseteq Y$, it follows that
\[
 [Q+1,R]\subseteq hY.
\]
Moreover,
\[
 2[Q+1,R]=[2Q+2,2R]=[R,2R].
\]
Thus, 
\[
 [R+1,2R]\subseteq hY.
\]
Consequently,
\[
 [Q+1,2R]\subseteq hY.
\]

In the proof of Lemma~\ref{lem2}, we put
\[
 V=Q-(h-2)L-U.
\]
Then
\[
 [2M+1,V]\subseteq E.
\]
Since $L=2T$, $M=(2h-3)T+1$ and $T\ge2U-1$, it follows that
\[
 V-\bigl((2h-1)M+1\bigr)
 =M-(h-2)L-U=T+1-U\ge0.
\]
Hence,
\[
 [2M+1,(2h-1)M+1]\subseteq E\subseteq X.
\]
Noting that $1\in E\subseteq X$, we apply Lemma~\ref{lem1} to $S=X$, $p=1$,
$u=2M+1$ and $v=(2h-1)M+1$. It follows from
\[
 (2h-1)M+1-\bigl(2(2M+1)-1-1\bigr)
 =(2h-5)M+1>0,
\]
that
\[
 [2M+h,\,h((2h-1)M+1)]\subseteq hX.
\]
Since $Q=2hM+1$ and $R=2Q+2=4hM+4$, it follows that
\[
 Q+1-(2M+h)=2(h-1)M-h+2>0
\]
and
\[
 h((2h-1)M+1)-(R+h-1)=h(2h-5)M-3\ge0.
\]
Thus,
\[
 [Q+1,R+h-1]\subseteq hX.
\]

On the other hand, $[R+1,2R]\subseteq X$ and $1\in X$. Applying Lemma~\ref{lem1}
with $S=X$, $p=1$, $u=R+1$ and $v=2R$. It follows from
\[
 2R=2(R+1)-1-1
\]
that
\[
 [R+h,2hR]\subseteq hX.
\]
Consequently,
\[
 [Q+1,2R]\subseteq hX.
\]
Combining the two inclusions proves \eqref{eq7}. 

This completes the proof of Lemma \ref{lem4}.
\end{proof}

\begin{lemma}\label{lem5}
Let sets $E,F$ and integers $W$ be as defined in Lemma \ref{lem3}. Put 
\[
 R=2W+2.
\]
We define
\[
X=E\cup [W+1,R],~~~Y=F\cup [R+1,2R+1].
\]
Then
\begin{equation}\label{eq8}
[W+1,2R+1]\subseteq hX\cap hY
\end{equation}
and
\begin{equation}\label{eq9}
[2R+2,2R+h]\subseteq hX.
\end{equation}
\end{lemma}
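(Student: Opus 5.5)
We follow the pattern of the proof of Lemma~\ref{lem4}: each of $X$ and $Y$ contains $0$ or $1$ together with a long interval, and Lemma~\ref{lem1} fills in the sums. The facts we use from Lemma~\ref{lem3} and its proof are $1\in C\subseteq E$, $0\in D\subseteq F$, $[M,2M]\subseteq E$, $[T+1,W]\subseteq hE\cap hF$ by \eqref{eq5}, and $W=2hM+1$, so that $R=2W+2=4hM+4$.

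\textbf{The set $Y$.} Since $0\in Y$, we have $Y\subseteq hY$. The elements of $[W+1,R]$ that are not in $H$ lie in $F\subseteq Y$, since $H\subseteq[0,W]$. Also $[R+1,2R+1]\subseteq Y$. Hence $[W+1,2R+1]\subseteq hY$ directly, with no need for Lemma~\ref{lem1}.

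\textbf{The set $X$, first range.} Here $0\notin X$, but $1\in X$. Since $[W+1,R]\subseteq X$ and $R=2W+2=2(W+1)-1+1\ge 2(W+1)-1-1$, Lemma~\ref{lem1} with $p=1$, $u=W+1$, $v=R$, $m=h$ gives
\[
[W+h,hR]\subseteq hX.
\]
As $hR\ge 2R+h$, this covers $[W+h,2R+1]$ and also gives \eqref{eq9}.

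\textbf{The gap $[W+1,W+h-1]$.} This short gap is the only delicate point, because $X$ has no $0$.
\begin{itemize}
\item The value $W+1$ is an element of $X$. Write $W+1=W'+(h-1)\cdot 1$ with $W'=W+2-h$. Then $W'\in[W-h+2,W]\subseteq H\subseteq E\subseteq X$, so $W+1\in hX$.
\item The values $W+j$ with $1\le j\le h-1$ are handled the same way: $W+j=(W-h+1+j)+(h-1)\cdot1$, and $W-h+1+j\in[W-h+2,W]\subseteq X$.
\end{itemize}
So the whole gap is covered, and $[W+1,2R+1]\subseteq hX$.

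Combining the two sets proves \eqref{eq8}. The main thing to check is that $W-h+1+j$ stays inside $[W-h+2,W]$, which holds exactly for $1\le j\le h-1$.
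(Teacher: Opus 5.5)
Your treatment of $X$ is correct and is the paper's argument: you cover $[W+1,W+h-1]$ via $(W-h+1+j)+(h-1)\cdot 1$ using $[W-h+2,W]\subseteq E$ and $1\in E$. You cover the rest via Lemma~\ref{lem1} with $p=1$, $u=W+1$, $v=R$, which also yields \eqref{eq9}.

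\textbf{The $Y$-half of \eqref{eq8} rests on a false claim.} In Lemma~\ref{lem3}, $F=D\cup([T+1,W]\setminus H)$ is a subset of $[0,W]$, so no element of $[W+1,R]$ lies in $F$. In fact $[W+1,R]$ was assigned to $X$, and $X\sqcup Y=[0,2R+1]$, so $Y\cap[W+1,R]=\varnothing$. The observation $Y\subseteq hY$ therefore covers only $[R+1,2R+1]$. Covering $[W+1,R]$ by $hY$ genuinely requires sums of several smaller elements of $F$. Note that \eqref{eq5} alone does not help, since it stops at $W$.

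\textbf{The missing step} is to locate a long interval inside $F$ and apply Lemma~\ref{lem1} with $p=0$.
\begin{itemize}
\item Since $2M<y_c<W-h+2$, the interval $K=[2M+1,y_c-1]$ meets none of the four pieces of $H$, so $K\subseteq F\subseteq Y$.
\item From $c\le T$ and $(h-2)L+T=M-1$ you get $y_c-1\ge(2h-1)M+1\ge 4M+1=2(2M+1)-1$.
\item Lemma~\ref{lem1} with $S=Y$, $p=0$, $u=2M+1$, $v=y_c-1$, $m=h$ then gives $[2M+1,h(y_c-1)]\subseteq hY$.
\item Since $R=4hM+4$, you have $h(y_c-1)-R\ge h(2h-5)M+h-4>0$, hence $[W+1,R]\subseteq hY$.
\end{itemize}
Together with $[R+1,2R+1]\subseteq Y\subseteq hY$, this gives the $Y$-half of \eqref{eq8}. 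This is exactly how the paper argues.
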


\begin{proof}
Noting that $[W-h+2,W]\subseteq E\subseteq X$ and $1\in E\subseteq X$,
for every $1\le j\le h-1$, we have
\[
 W+j=(W-h+1+j)+(h-1)\cdot1\in hX.
\]
Hence,
\[
 [W+1,W+h-1]\subseteq hX.
\]
Applying Lemma~\ref{lem1} to $S=X$, $p=1$,
$u=W+1$ and $v=R$. By $[W+1,R]\subseteq X$ and \[
 R=2W+2>2(W+1)-1-1=2W,
\]
we have
\[
 [W+h,hR]\subseteq hX.
\]
Moreover, by $h\ge3$ and $R>T\ge T_0$, we have
\[
 hR-(2R+h)=(h-2)R-h\ge0.
\]
 Therefore,
\[
 [W+1,2R+h]\subseteq hX.
\]
In particular,
\[
 [W+1,2R+1]\subseteq hX
\]
and
\[
 [2R+2,2R+h]\subseteq hX.
\]

By Lemma~\ref{lem3}, we have
\[
 2M<y_c<W-h+2.
\]
Thus, by the definition of $F$, we have
\[
 K=[2M+1,y_c-1]\subseteq F\subseteq Y.
\]
Since $c\le T$ and $(h-2)L+T=M-1$, it follows that
\[
 y_c-1\ge W-(M-1)-1=(2h-1)M+1.
\]
In particular,
\[
 y_c-1\ge4M+1=2(2M+1)-1.
\]
Since $0\in F\subseteq Y$, Lemma~\ref{lem1} applied to $S=Y$, $p=0$,
$u=2M+1$ and $v=y_c-1$ gives
\[
 [2M+1,h(y_c-1)]\subseteq hY.
\]
Furthermore,
\[
 h(y_c-1)-R
 \ge h(2h-5)M+h-4>0,
\]
where $R=2W+2=4hM+4$. Since $W+1>2M+1$, it follows that
\[
 [W+1,R]\subseteq hY.
\]

On the other hand, noting that $[R+1,2R+1]\subseteq Y$ and $0\in Y$, we have
\[
 [R+1,2R+1]\subseteq hY.
\]
Consequently,
\[
 [W+1,2R+1]\subseteq hY.
\]
Hence, \eqref{eq8} and \eqref{eq9} are true.

This completes the proof of Lemma \ref{lem5}.
\end{proof}
\section{Proof of Theorem~\ref{thm1}}
The case $h=2$ follows from the theorem of Erd\H{o}s and Nathanson on infinitely oscillating bases and nonbases \cite{ErdosNathanson1976}. We therefore assume that $h\ge3$ and retain the notation
\[
 d=10h,\qquad T_0=100h^2.
\]
Choose an integer $W_0\ge T_0$ such that
\[
 W_0>d+2h.
\]
Put
\[
 R_0=2W_0+2,\qquad T_1=2R_0+1,\qquad U_1=R_0+1.
\]
Choose a partition $A_1\sqcup B_1=[0,T_1]$ such that
\[
 1,2,3,4\in A_1,\qquad 0\in B_1,\qquad [d,d+2h]\subseteq B_1,\qquad [W_0+1,R_0]\subseteq A_1,\qquad [R_0+1,T_1]\subseteq B_1.
\]
Noting that $U_1=R_0+1$, we have
\[
 [U_1,T_1]\subseteq B_1,\qquad T_1=2U_1-1.
\]
Moreover, $1\in A_1$ and $[W_0+1,R_0]\subseteq A_1$. Applying Lemma~\ref{lem1} to $S=A_1$, $p=1$, $u=W_0+1$ and $v=R_0$, we have
\[
 [W_0+h,hR_0]\subseteq hA_1.
\]
Noting that $W_0\ge T_0$, we have
\[
 W_0+h\le T_1+1
\]
and
\[
 T_1+h-1=2R_0+h\le hR_0.
\]
Consequently,
\[
 [T_1+1,T_1+h-1]\subseteq hA_1.
\]
Thus $A_1$ and $B_1$ satisfy the hypotheses of Lemma~\ref{lem2} with $T=T_1$ and $U=U_1$.

We now construct the two sets recursively. Fix a sequence $\sigma(1),\sigma(2),\ldots$ of positive integers such that every positive integer occurs infinitely often and
\[
 \sigma(k)\le k
\]
for every $k\ge1$. Suppose that, for some $k\ge1$, the partition has been determined on $[0,T_k]$ as
\[
 A_k\sqcup B_k=[0,T_k],
\]
where
\[
 [U_k,T_k]\subseteq B_k,\qquad T_k=2U_k-1,
\]
and
\[
 [T_k+1,T_k+h-1]\subseteq hA_k.
\]

Applying Lemma~\ref{lem2} with $C=A_k$ and $D=B_k$, we obtain an integer $Q_k$ and a partition $E_k\sqcup F_k=[0,Q_k]$ such that
\[
 [T_k+1,Q_k]\subseteq hE_k,
\]
\[
 [T_k+1,Q_k-1]\subseteq hF_k
\]
and
\[
 Q_k\notin hF_k.
\]
Put
\[
 R_k=2Q_k+2,\qquad S_k=2R_k.
\]
Applying Lemma~\ref{lem4} to $E_k$ and $F_k$, we obtain an integer $S_k$ and a partition $[0,S_k]=C_k\sqcup D_k$ such that
\[
 [Q_k+1,S_k]\subseteq hC_k\cap hD_k
\]
and
\[
 [R_k+1,S_k]\subseteq C_k,\qquad S_k=2(R_k+1)-2.
\]
Also, we have
\[
 0\in D_k,\qquad 1\in C_k,\qquad [d,d+2h]\subseteq D_k.
\]
Thus $C_k$ and $D_k$ satisfy the hypotheses of Lemma~\ref{lem3} with $T=S_k$ and $U=R_k+1$.

Since each interval $[R_j+1,S_j]\neq\emptyset$, before the $k$-th stage at least $k$ elements of the eventual set $A$ have already been determined. Moreover, every element assigned later is larger than $S_k$. Hence the first $k$ elements of the eventual set $A$ are already fixed. Let $c_k$ be the $\sigma(k)$-th smallest element of $C_k$.

Applying Lemma~\ref{lem3} with $C=C_k$, $D=D_k$ and $c=c_k$, we obtain an integer $W_k$ and a partition $\widetilde E_k\sqcup\widetilde F_k=[0,W_k]$ such that
\[
 [S_k+1,W_k]\subseteq h\widetilde E_k\cap h\widetilde F_k
\]
and
\[
 W_k\notin h(\widetilde E_k\setminus\{c_k\}).
\]
Put
\[
 P_k=2W_k+2,
\]
and apply Lemma~\ref{lem5}. Then we obtain a partition $[0,T_{k+1}]=A_{k+1}\sqcup B_{k+1}$, where
\[
 T_{k+1}=2P_k+1,\qquad U_{k+1}=P_k+1.
\]
By the definition of $B_{k+1}$,
\[
 [U_{k+1},T_{k+1}]\subseteq B_{k+1},\qquad T_{k+1}=2U_{k+1}-1.
\]
Also, by \eqref{eq9}, we have
\[
 [T_{k+1}+1,T_{k+1}+h-1]\subseteq hA_{k+1}.
\]
Thus $A_{k+1}$ and $B_{k+1}$ again satisfy the hypotheses of Lemma~\ref{lem2}. 

We next prove by induction that, for every integer $k\ge1$, we have
\begin{equation}\label{eq10}
[T_1+1,T_k]\subseteq hA_k,
\qquad
[T_1+1,T_k]\setminus\{Q_1,\ldots,Q_{k-1}\}\subseteq hB_k,
\end{equation}
and
\begin{equation}\label{eq11}
Q_j\notin hB_k\qquad(1\le j\le k-1).
\end{equation}
For $k=1$, \eqref{eq10} and \eqref{eq11} are true. Assume that they hold for some $k\ge1$. Since every later block preserves all previously assigned elements, the conclusions of Lemmas~\ref{lem2}--\ref{lem5} remain valid in $A_{k+1}$ and $B_{k+1}$. By Lemma~\ref{lem2}, we have
\[
 [T_k+1,Q_k]\subseteq hA_{k+1}
\]
and
\[
 [T_k+1,Q_k-1]\subseteq hB_{k+1}.
\]
By Lemma~\ref{lem4}, we have
\[
 [Q_k+1,S_k]\subseteq hA_{k+1}\cap hB_{k+1}.
\]
By Lemma~\ref{lem3}, we have
\[
 [S_k+1,W_k]\subseteq hA_{k+1}\cap hB_{k+1}.
\]
Finally, Lemma~\ref{lem5} gives
\[
 [W_k+1,T_{k+1}]\subseteq hA_{k+1}\cap hB_{k+1}.
\]
Therefore,
\[
 \begin{split}
 [T_k+1,T_{k+1}]
 ={}&[T_k+1,Q_k]\cup[Q_k+1,S_k]\\
 &\cup[S_k+1,W_k]\cup[W_k+1,T_{k+1}]
 \subseteq hA_{k+1},
 \end{split}
\]
and
\[
 \begin{split}
 [T_k+1,T_{k+1}]\setminus\{Q_k\}
 ={}&[T_k+1,Q_k-1]\cup[Q_k+1,S_k]\\
 &\cup[S_k+1,W_k]\cup[W_k+1,T_{k+1}]
 \subseteq hB_{k+1}.
 \end{split}
\]
Combining these inclusions with the induction hypothesis proves \eqref{eq10} for $k+1$.

For every $1\leq j\leq k$, by Lemma~\ref{lem2}, we have $Q_j\notin hF_j$. Suppose that $Q_j\in hB_{k+1}$. Then there exist $b_1,\ldots,b_h\in B_{k+1}$ such that
\[
 Q_j=b_1+\cdots+b_h.
\]
Since $b_1,\ldots,b_h$ are nonnegative, it follows that
\[
 0\le b_i\le Q_j\qquad(1\le i\le h).
\]
Consequently,
\[
 b_i\in B_{k+1}\cap[0,Q_j]=F_j\qquad(1\le i\le h),
\]
and hence
\[
 Q_j=b_1+\cdots+b_h\in hF_j,
\]
a contradiction. Therefore,
\[
 Q_j\notin hB_{k+1}\qquad(1\le j\le k).
\]
This proves \eqref{eq11} for $k+1$. Thus \eqref{eq10} and \eqref{eq11} hold for every positive integer $k$.

Noting that $T_{k+1}>T_k$ for every $k$, we have $T_k\to\infty$. Put
\[
 A=\bigcup_{k\ge1}A_k,\qquad B=\bigcup_{k\ge1}B_k.
\]
By the definition of $A_k$ and $B_k$, we have
\[
 \Nzero=A\sqcup B.
\]
It follows from \eqref{eq10} that
\[
 [T_1+1,\infty)\subseteq hA.
\]
Thus $A$ is an asymptotic basis of order $h$.

By \eqref{eq11} and the argument above, we have
\[
 Q_k\notin hB
\]
for every positive integer $k$. By the construction, $Q_{k-1}<T_k<Q_k$ for $k\ge2$, and hence $Q_1<Q_2<\cdots$. So the set $hB$ misses infinitely many positive integers. Hence, $B$ is an asymptotic nonbasis of order $h$.

Fix $a\in A$. Since $T_k\to\infty$, there exists an integer $k_0$ such that
\[
 a\in A_k
\]
for every $k\ge k_0$. By \eqref{eq4}, we have
\[
 Q_k\in h(B\cup\{a\})
\]
for every $k\ge k_0$. On the other hand, by \eqref{eq10}, we have
\[
 [T_1+1,\infty)\setminus\{Q_1,Q_2,\ldots\}\subseteq hB.
\]
Consequently, every sufficiently large integer belongs to $h(B\cup\{a\})$. Hence, $B$ is a maximal asymptotic nonbasis of order $h$.

Write
\[
 A=\{a_1<a_2<\cdots\}.
\]
Fix $a_i\in A$. Since every positive integer occurs infinitely often in the sequence $\sigma$, there are infinitely many integers $k$ such that
\[
 \sigma(k)=i.
\]
For each such $k$, noting that $\sigma(k)\le k$, by the choice of $c_k$, we have
\[
 c_k=a_i.
\]
By Lemma~\ref{lem3}, we have
\[
 W_k\notin h(\widetilde E_k\setminus\{a_i\}).
\]
Suppose that $W_k\in h(A\setminus\{a_i\})$. Then there exist $u_1,\ldots,u_h\in A\setminus\{a_i\}$ such that
\[
 W_k=u_1+\cdots+u_h.
\]
Since $u_1,\ldots,u_h$ are nonnegative, we have
\[
 0\le u_i\le W_k\qquad(1\le i\le h).
\]
Thus,
\[
 u_i\in (A\setminus\{a_i\})\cap[0,W_k]
 =\widetilde E_k\setminus\{a_i\}
 \qquad(1\le i\le h),
\]
and therefore
\[
 W_k=u_1+\cdots+u_h\in h(\widetilde E_k\setminus\{a_i\}),
\]
a contradiction. Hence,
\[
 W_k\notin h(A\setminus\{a_i\}).
\]
Moreover, by the construction, we have
\[
 W_k<T_{k+1}<W_{k+1}
\]
for every $k\ge1$. Hence the infinitely many indices satisfying $\sigma(k)=i$ yield infinitely many distinct integers missing from $h(A\setminus\{a_i\})$. Consequently, $A\setminus\{a_i\}$ is not an asymptotic basis of order $h$. Hence, $A$ is minimal.

Thus $A$ is a minimal asymptotic basis of order $h$ and $B$ is a maximal asymptotic nonbasis of order $h$. 

This completes the proof of Theorem \ref{thm1}.

\end{document}